\title{On maximal stable quotients of definable groups  in $NIP$ theories}
\date{\today}
\author{Mike Haskel and Anand Pillay\thanks{Partially supported by NSF grant DMS-1360702}\\University of Notre Dame}
\newtheorem{Theorem}{Theorem}[section]
\newtheorem{Definition}[Theorem]{Definition} 
\newtheorem{Remark}[Theorem]{Remark}
\newtheorem{Lemma}[Theorem]{Lemma}
\newtheorem{Corollary}[Theorem]{Corollary}
\newtheorem{Problem}[Theorem]{Problem}
\begin{document}
\maketitle

\begin{abstract} For  $G$ a group definable in a saturated model of a $NIP$ theory $T$, we prove that there is a smallest type-definable subgroup $H$ of $G$ such that the quotient $G/H$ is stable. This generalizes the existence of $G^{00}$, the smallest type-definable subgroup of $G$ of bounded index. 
\end{abstract}

\section{Introduction and preliminaries}
Fix an arbitrary theory $T$ and saturated model $\bar M$.  If $G$ is a definable group and $A$ a set of parameters over which $G$ is definable, then there is always a smallest subgroup of $G$ which is type-definable over $A$ and has bounded index in $G$. This group is called $G_{A}^{00}$, and is a normal subgroup.   Moreover $G/G^{00}_{A}$  has naturally the structure of a compact (Hausdorff) topological group. 
When $T$ has $NIP$ it was proved (\cite{Shelah},  \cite{NIPI}) that $G^{00}_{A}$  does not depend on $A$, so the compact quotient $G/G^{00}$ is a canonical invariant of $G$,  independent of any choice of parameters. This paper is motivated partly by the question whether there are other interesting canonical quotients of a  definable group $G$ in the $NIP$ environment.  Feeding into this is the example of $ACVF$ where the additive group of the residue field $k$ is a stable quotient of the additive group of the valuation ring $V$ and is the maximal such.  Moreover we have ``domination" of $V$ by the $k$, again analogous to the compact domination of $G$ by $G/G^{00}$ in some cases.  

So we are interested in {\em stable} quotients of a definable group. By {\em stability} of a definable set $X$ in an ambient (saturated) structure, we mean that every complete type over a model, which  extends  $X$, is definable. This notion is sometimes called ``stable, stably embedded". 
The notion extends to type-definable sets and there are equivalent characterizations in terms of the order property (see for example the appendix to  \cite{CH}, where  actually the expression ``fully stable" is used).  However the group quotients we are interested in may very well be by a type-definable subgroup $H$ in which case $G/H$ is hyperdefinable. By a hyperdefinable set we mean precisely the quotient $X/E$ of a (type-)definable set by a type-definable equivalence relation. So we have to make sense of the stability of a hyperdefinable set in an arbitrary ambient theory, and we give a suitable definition below (Definition 1.3).

The main result is: 
\begin{Theorem} Suppose $T$is $NIP$, and $G$ is a definable group, defined over $\emptyset$ in a saturated model of $T$. Then there is a type-definable subgroup of $G$ which we call $G^{st}$,  with the following characteristic properties:
\begin{enumerate}
\item $G^{st}$ is type-definable over $\emptyset$,
\item $G/G^{st}$ is stable, and 
\item $G^{st}$ is the smallest type-definable (with parameters) subgroup $H$ of $G$ such that $G/H$ is stable. 
\end{enumerate}
Moreover $G^{st}$ is a normal subgroup. 
\end{Theorem}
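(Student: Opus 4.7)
The plan is to define $G^{st}$ as the intersection of all type-definable subgroups $H$ of $G$ (over arbitrary parameters) for which $G/H$ is stable in the sense of Definition 1.3, and then verify the three stated properties together with normality. The crux is to show that this family of subgroups is closed under arbitrary intersection (so that $G^{st}$ itself lies in the family, giving properties (2) and (3)), and to show that the intersection can be taken over a bounded collection of $\emptyset$-type-definable subgroups, which gives property (1).

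For closure under intersection I would proceed in two steps. First the finite case: if $G/H_1$ and $G/H_2$ are stable, then the natural injection $G/(H_1 \cap H_2) \hookrightarrow G/H_1 \times G/H_2$ presents $G/(H_1\cap H_2)$ as a hyperdefinable subgroup of a product of stable hyperdefinable sets, and stability of finite products of stable hyperdefinable sets, and of hyperdefinable sub-objects thereof, is standard (cf.\ the appendix of \cite{CH}). Second, the infinite-to-finite reduction via compactness: suppose $H = \bigcap_{i \in I} H_i$ and let $\phi(x_1,\ldots,x_n)$ be a formula that is $H$-invariant in each coordinate, meaning $\phi(\bar g)\leftrightarrow \phi(\bar g')$ whenever $g_j^{-1}g_j' \in H$ for every $j$. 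The partial type $\pi(\bar g,\bar g')$ expressing ``$g_j^{-1}g_j' \in H$ for all $j$'' decomposes as $\bigcap_{i \in I} \pi_i(\bar g,\bar g')$ with $\pi_i$ the analogous condition for $H_i$; since $\pi$ entails the definable condition $\phi(\bar g)\leftrightarrow\phi(\bar g')$, compactness yields a finite $F \subseteq I$ such that $\phi$ is $H_F$-invariant for $H_F = \bigcap_{i\in F}H_i$. Thus $\phi$ factors through $(G/H_F)^n$; any order-property witness for $\phi$ on $G/H$ is automatically, by the $H_F$-invariance combined with strict alternation, a witness distinct modulo $H_F$ of the order property for $\phi$ on $G/H_F$, contradicting stability of $G/H_F$ from the finite case. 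Hence $G/H$ admits no $H$-invariant formula with the order property, and is stable.

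For type-definability over $\emptyset$, note that conjugation by an automorphism sends any $H$ in the family to another member of the family (since $G/\sigma(H) \cong G/H$ as hyperdefinable sets), and that for $H$ type-definable over a parameter set $A$, the intersection $H'$ of the $\operatorname{Aut}(\bar M/\emptyset)$-conjugates of $H$ is an intersection over a bounded (of cardinality at most $2^{|T|+|A|}$) family of type-definable subgroups, hence remains type-definable; being $\operatorname{Aut}(\bar M/\emptyset)$-invariant, in the saturated model $\bar M$ it is type-definable over $\emptyset$; and $G/H'$ is stable by the intersection step. Consequently $G^{st}$ equals the intersection of all $\emptyset$-type-definable members of the family, a collection of at most $2^{|T|}$ partial types over $\emptyset$, so $G^{st}$ is itself type-definable over $\emptyset$. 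Property (3) is immediate from the definition of $G^{st}$ as the intersection of the whole family. Finally, normality: for $g \in G$, inner conjugation by $g$ preserves the family (it sends $H$ to $gHg^{-1}$, with the same quotient), so $gG^{st}g^{-1} = G^{st}$.

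The main obstacle I anticipate is the finite case in step one: verifying carefully, under the hyperdefinable notion of stability of Definition 1.3, that stability is preserved by finite products and by passage to hyperdefinable sub-groups, rather than treating this as a black box. The infinite-to-finite compactness reduction, and the descent to $\emptyset$ by averaging over $\operatorname{Aut}(\bar M/\emptyset)$-conjugates, are both clean once the finite stable-product closure is solidly established.
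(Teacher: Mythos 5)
There is a genuine gap, and it is located precisely where the $NIP$ hypothesis has to enter: your argument never uses $NIP$ at all, whereas the theorem fails without it (already for bounded quotients, which are automatically stable, since $G^{00}_{A}$ can depend on $A$ in general). The false step is the claim that the intersection of the $\mathrm{Aut}(\bar M/\emptyset)$-conjugates of $H$ runs over a bounded family ``of cardinality at most $2^{|T|+|A|}$.'' If $H = H_{a}$ is type-defined over a tuple $a$, its conjugates are the groups $H_{a'}$ for $a'$ ranging over the realizations of $tp(a)$ in $\bar M$; what is bounded is the number of \emph{types} of such parameters, not the number of realizations, and distinct realizations can define distinct subgroups (e.g.\ a uniformly definable unbounded family of ``lines'' in $G = (\bar M,+)^{2}$). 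So the family of conjugates is in general unbounded, and the intersection of an unbounded family of type-definable sets need not be type-definable. Establishing that this intersection \emph{is} a bounded subintersection is exactly the content of Lemma 2.3, which is where the whole work of the paper lies: one first proves a Baldwin--Saxl-type chain condition for type-definable subgroups under $NIP$ (Lemma 2.1 and Corollary 2.2, via a formula with the independence property built from group products, plus Erd\"os--Rado), and then uses the stability of $G/H$ (Definition 1.3, applied to the indiscernible sequence $(b_{i}, h_{i}/H)_{i}$) to upgrade ``$h_{i}\in H_{j}$ for $j<i$'' to ``$h_{i}\in H_{j}$ for all $i\neq j$,'' contradicting the chain condition. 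Without an argument of this kind your construction of an $\emptyset$-type-definable member of the family below a given $H$ does not go through, and neither does the final claim that $G^{st}$ is an intersection of boundedly many partial types.

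A secondary comment: the closure-under-intersection step is much simpler than you make it. For a \emph{bounded} family $\{H_{i}\}_{i\in I}$, the quotient $G/\bigcap_{i}H_{i}$ embeds as a type-definable subset of the (possibly infinite) product $\prod_{i}G/H_{i}$, and Remark 1.4 of the paper states that stability of hyperdefinable sets is preserved under such products and under type-definable subsets; no compactness reduction to finite subfamilies is needed. For an \emph{unbounded} family the intersection is not in general a hyperdefinable set, so the question of its stability does not arise in this framework, and your compactness argument (phrased in terms of an order property for formulas, which in any case would need to be reconciled with Definition 1.3) cannot substitute for the boundedness statement above. Once Lemma 2.3 is available, the rest of your outline --- descent to $\emptyset$ by intersecting conjugates, taking the intersection over all $\emptyset$-type-definable members, and normality via conjugation by elements of $G$ --- matches the paper's proof of Theorem 1.1.
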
 

The proof also works when $G$  itself  is type-definable.
We think of $G^{st}$ as the {\em stably connected component} of $G$, noting that $G^{st}$ will have no  proper hyperdefinable stable quotient.  Actually we could and should distinguish between $G^{st,0}$ and $G^{st,00}$ where $G^{st,0}$ is the intersection of all definable subgroups of $G$ for which the quotient is stable, and $G^{st,00}$ which is what we have called $G^{st}$.  We will discuss some issues and questions around this at the end of the paper.  In any case our proof of Theorem 1.1 in the next section will be an adaptation of the proof  in \cite{NIPI}, Proposition 6.1,  of  Shelah's result on the existence of $G^{00}$ in $NIP$ theories  \cite{Shelah}.

Our notation is standard. $T$ will denote a complete theory, and we typically work in a very saturated model ${\bar M}$ of $T$ (which we may sometimes want to enlarge).  ``Small" or ``bounded" means of cardinality strictly less that the degree of saturation of the monster model.  Definable, type-definable etc. usually means with parameters which we sometimes exhibit (as in ``$A$-definable"  or ``over $A$"). We feel free to use standard techniques in model theory without further explanation. 

The notion of boundedness is a bit delicate. For example suppose $X_{a}$ is a set which is type-definable over a possibly infinite tuple $a$. Then if we say that the intersection of the set of conjugates $\{X_{a'}: tp(a') = tp(a)\}$ is a bounded subintersection, we mean  that in some model $M$ there is a set $A$ of realizations of $tp(a)$ such that for any elementary extension $M'$ of $M$  the intersection of $\{X_{a'}: a'\in M', tp(a') = tp(a)\}$ is equal to $\cap_{a'\in A}X_{a'}$.  Although we could easily compute such a bound if it exists, we will not worry about it.  In practice, $a$ may be a countable tuple, and we want unboundedness to enable us to apply Erd\"os-Rado,  namely extract indiscernibles, so fixing in advance a suitable degree of saturation of the monster model $\bar M$ will be enough for us to work only in ${\bar M}$. 

Also we will assme $T = T^{eq}$.

\begin{Definition} By a hyperdefinable set we mean something of the form $X/E$ where $X$ is type-definable, and $E$ is a type-definable equivalence relation on $X$. If both $X,E$ are (type-)defined over $A$ we say $X/E$ is defined over $A$.
\end{Definition}

We usually take $X$ to be a subset of a sort $S$  in $T^{eq}$ (although   the definition makes sense and will be used  when $X$ is so-called $*$-definable, namely is a set of possibly infinite tuples).  It can be easily seen that the equivalence relation $E$ on $X$ is the restriction to $X$ of
some equivalence relation $E'$ on $S$, type-defined over the same set of parameters as $E$ and $X$.  Moreover we can rechoose $X$ to be  a union of $E'$-classes (and still type-definable over the same parameters as $X$ and $E$).  So hyperdefinable sets are ``type-definable" subsets of hyperdefinable sets of the form $S/E$ ($S$ a sort). Assuming $E$ is over $A$, elements of $S/E$ are called hyperimaginaries over $A$, and basic notions of model theory (types, indiscernibles, \ldots) were extended to hyperimaginaries in \cite{HKP} with  a nice account in \cite{Casanovas-book}  for example.

The notion of a type-definable subset of a hyperdefinable set $X/E$  is obvious (namely $X'/E$ for some type definable subset of $X$). Also a possibly infinite Cartesian product of hyperdefinable sets is a hyperdefinable set.

\begin{Definition}  Assume $X/E$ is over $A$. Then $X/E$ is stable (relevant to the ambient theory $T$)  if for all  sequences $(a_{i},b_{i})_{i<\omega}$ which are indiscernible over  $A$, and with $a_{i}\in X/E$ for all (some) $i$,  we have that $tp(a_{i}, b_{j}/A) = tp(a_{j},b_{i}/A)$ when $i\neq j$. 
\end{Definition} 

In the current paper we will be directly using Definition 1.3, which note agrees with the established notions for stability of type-definable sets in an arbitrary ambient theory. Note also that if $E$ is  bounded (i.e. has boundedly many classes) then $X/E$ is stable. 

It is easy to see that:
\begin{Remark}  Stability of hyperdefinable sets is preserved under (possibly infinite) Cartesian products and taking type-definable subsets. 
\end{Remark}

By a hyperdefinable group, we will mean a hyperdefinable set $G$ (in a saturated model of $T$) equipped with a group operation  which is a type-definable subset of $G\times G\times G$. In this paper we will be mainly concerned with hyperdefinable groups (or homogeneous spaces) of the form $G/H$ where $G$ is a definable group and $H$ a type-definable subgroup.

\section{Proof of  Theorem 1.1}

Recall that the $NIP$ chain condition on definable subgroups, says that if $G$ is a definable group in an $NIP$ theory and  $\cal H$ is a uniformly definable family of definable subgroups of $G$, then there is $N<\omega$ such that any intersection of finitely many members of $\cal H$ is a subintersection of at most $N$ elements of $\cal H$.  The first lemma is an analogue for type-definable subgroups, which is easily seen to imply the definable case mentioned above.
\begin{Lemma} Assume $T$ has $NIP$, and $G$ is a definable group, defined over $A$ say.   Suppose that $H_{a_{0}}$ is a type-definable subgroup of $G$, defined over the tuple $a_{0}$. Then there do not exist an $A$-indiscernible sequence $((a_{i},g_{i}):i < \omega)$ such that 

$$ g_{i}\in H_{a_{j}} \leftrightarrow i\neq j$$

for all $i,j <\omega$. 

\end{Lemma}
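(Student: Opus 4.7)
The plan is a proof by contradiction that adapts the Baldwin--Saxl chain condition argument for NIP theories from the uniformly definable to the type-definable setting. Assuming such an indiscernible sequence $((a_i, g_i) : i < \omega)$ exists, I aim to exhibit a single formula with IP, contradicting the hypothesis.

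Let $\pi(x, y)$ be the partial type (over $A$) defining the subgroup $H_y$, so that $g \in H_{a_j}$ iff $\models \pi(g, a_j)$. Since $g_0 \notin H_{a_0}$, some formula $\phi_0(x, y) \in \pi$ satisfies $\models \neg \phi_0(g_0, a_0)$, and by $A$-indiscernibility $\models \neg \phi_0(g_i, a_i)$ for all $i$. The heart of the argument is a compactness step that exploits the subgroup structure: because $\pi(L, y) \wedge \pi(R, y) \wedge \pi(LgR, y) \vdash \pi(g, y)$ (using that $g = L^{-1}(LgR)R^{-1}$ and $\pi(\cdot, y)$ is closed under products and inverses), one can find a single formula $\psi(x, y) \in \pi$, implying $\phi_0$, such that
$$\psi(L, y) \wedge \psi(R, y) \wedge \psi(LgR, y) \to \phi_0(g, y).$$

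With $\psi$ in hand, for each finite $S \subseteq \omega$ set $g_S := \prod_{i \in S} g_i$ (taken in increasing order of $i$); I claim $\models \psi(g_S, a_j)$ iff $j \notin S$. If $j \notin S$, then every factor $g_i$ lies in $H_{a_j}$, so $g_S \in H_{a_j}$ and in particular $\psi(g_S, a_j)$ holds. If $j \in S$, decompose $g_S = L g_j R$ where $L, R$ are the products of the $g_i$'s with $i \in S \setminus \{j\}$ to the left and right of $g_j$; these lie in $H_{a_j}$, giving $\psi(L, a_j)$ and $\psi(R, a_j)$, so if $\psi(g_S, a_j)$ also held, the displayed implication would force $\phi_0(g_j, a_j)$, contradicting $\models \neg \phi_0(g_j, a_j)$. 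Letting $S$ range over finite subsets of $\omega$, we see that $\psi(x, y)$ cuts out every finite subset of $\{a_j : j < \omega\}$, so $\psi$ has IP.

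The step I expect to require the most care is the compactness upgrade producing $\psi$. In the definable Baldwin--Saxl case one works directly with the single defining formula; in the type-definable setting, without pinning down a fixed $\psi$ we would only know that \emph{some} formula of $\pi(x, a_j)$ fails at $g_S$ when $j \in S$, possibly a different formula for each configuration, which is insufficient to extract IP for one fixed formula.
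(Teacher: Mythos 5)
Your proof is correct and takes essentially the same route as the paper's: a proof by contradiction which uses compactness (from the fact that $H_{a_0}\,g_0\,H_{a_0}$ is disjoint from $H_{a_0}$) to extract a single formula of the defining partial type, and then uses the products $\prod_{i\in S}g_i$ to shatter $\{a_j : j<\omega\}$ and exhibit IP. The only cosmetic difference is that you also approximate the hypotheses $L,R\in H_y$ by the single formula $\psi$, whereas the paper keeps them as the full type-definable condition and only approximates the conclusion; both work, since the witnesses $L,R$ genuinely lie in $H_{a_j}$.
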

\begin{proof} Suppose otherwise. Without loss of generality $A=\emptyset$. 
Note that for any $y_{1}, y_{2}\in H_{a_{0}}$, $y_{1}g_{0}y_{2}\notin H_{a_{0}}$ (as $g_{0}\notin H_{a_{0}}$). So by compactness there is a formula $\phi(y,a_{0})$ in the partial type defining $H_{a_{0}}$ such that if $d_{1},d_{2}\in H_{a_{0}}$ and $d = d_{1}g_{0}d_{2}$ then 
$\models \neg\phi(d,g_{0})$. 

\vspace{2mm}
\noindent
By indiscernibility, for each $i<\omega$, $\phi(y,a_{i})$ has the same property with respect to $H_{a_{i}}$.  Namely
\newline
(*) whenever $d_{1}, d_{2}\in H_{a_{i}}$ and $d = d_{1}g_{i}d_{2}$ then 
$$\models \neg\phi(d,a_{i})$$

\vspace{2mm}
\noindent
We now show that the formula $\phi(y,z)$ has the independence property. Let $I$ be any finite subset of $\omega$, and let $d_{I}\in G$ be the group product  $\prod_{i\in I}g_{i}$.  Then by (*) and the fact that $g_{i}\in H_{j}$ whenever $i\neq j$, we see that for all $j<\omega$,  we have
$\models \phi(d_{I},a_{j})$ iff  $j\notin I$. 

So $T$ has the independence property. Contradiction. 
\end{proof}

\begin{Corollary} Let again $T$ have $NIP$,  $G$ be a definable group over $A$, and $H_{a_{0}}$ a type-definable subgroup of $G$, defined over $a_{0}$.
 Then for any sufficiently large family $\{H_{a_{\alpha}}:\alpha < \kappa\}$ of $A$-automorphic conjugates of $H_{a_{0}}$, there is $\beta<\kappa$ such that 
$H_{a_{\beta}}\supseteq \cap_{\alpha\neq\beta}H_{a_{\alpha}}$. 
\end{Corollary}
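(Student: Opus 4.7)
The plan is to derive Corollary 2.2 from Lemma 2.1 via a straightforward Erd\H{o}s--Rado argument. I would argue by contradiction: suppose that for every $\beta < \kappa$ we have $H_{a_\beta} \not\supseteq \bigcap_{\alpha \neq \beta} H_{a_\alpha}$. Then for each $\beta$ I can choose an element $g_\beta \in G$ such that $g_\beta \in H_{a_\alpha}$ for all $\alpha \neq \beta$ while $g_\beta \notin H_{a_\beta}$. This produces a $\kappa$-sequence of pairs $((a_\alpha, g_\alpha))_{\alpha < \kappa}$ with the ``incidence'' pattern $g_\alpha \in H_{a_\beta} \leftrightarrow \alpha \neq \beta$.

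Next, provided $\kappa$ is chosen large enough relative to $|T| + |A|$ (the standard Erd\H{o}s--Rado threshold $\beth_{(2^{|T|+|A|})^+}$ is more than enough), I can pass to an infinite $A$-indiscernible subsequence $((a_{\alpha_i}, g_{\alpha_i}))_{i < \omega}$. The condition ``$g_\alpha \in H_{a_\beta}$ iff $\alpha \neq \beta$'' is preserved verbatim on any subsequence, so reindexing gives an $A$-indiscernible sequence $((a'_i, g'_i))_{i<\omega}$ with $g'_i \in H_{a'_j} \leftrightarrow i \neq j$. This is exactly the configuration ruled out by Lemma 2.1, giving the desired contradiction.

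The only genuinely delicate point is the ``sufficiently large'' clause and the attendant boundedness bookkeeping flagged in the introduction: since $H_{a_0}$ is only type-definable, one must be a little careful that extracting indiscernibles can in fact be arranged inside $\bar M$, which is the reason the paper insists on fixing the saturation of $\bar M$ appropriately in advance. Modulo that choice, the argument is essentially automatic -- the content all sits in Lemma 2.1, and Corollary 2.2 is a routine compactness/Ramsey repackaging.
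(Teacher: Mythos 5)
Your argument is exactly the paper's proof: choose witnesses $g_\beta\in\bigcap_{\alpha\neq\beta}H_{a_\alpha}\setminus H_{a_\beta}$ and apply Erd\H{o}s--Rado to reduce to the configuration forbidden by Lemma 2.1. The only imprecision (shared with the paper's own telegraphic wording) is that Erd\H{o}s--Rado yields an indiscernible sequence realizing the EM-type of the original rather than a literal subsequence, so preserving $g_i\notin H_{a_i}$ requires first fixing, by pigeonhole, a single formula from the type defining $H$ witnessing non-membership for unboundedly many $\beta$; this is routine and does not affect correctness.
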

\begin{proof} Let us write $H_{\alpha}$ for $H_{a_{\alpha}}$.   If the corollary fails, let $g_{\beta}$ be chosen for each $\beta<\kappa$ such that $g_{\beta}\in H_{\alpha}$ for all $\alpha\neq\beta$, but $g_{\beta}\notin H_{\beta}$.  By Erd\"os-Rado we can rechoose an indiscernible (over $A$) sequence $(a_{i},g_{i})_{i<\omega}$ such that $g_{i}\in \cap_{j\neq i}H_{j}\setminus H_{i}$ for all $i$, contradicting Lemma 2.1. 
\end{proof}

\begin{Remark}  Let us say that a hyperdefinable set $X/E$ has $NIP$ if there do not exist an indiscernible sequence $(b_{i}:i<\omega)$ and $d\in X/E$ such that $((d,b_{2i}, b_{2i+1}):i<\omega)$ is indiscernible and $tp(d,b_{0}) 
\neq tp(d,b_{1})$.   (Note that the $b_{i}$ can be anywhere, not necessarily in $X/E$.) Then  the proofs above can be modified to yield the same conclusions for  $G$  hyperdefinable with $NIP$. 

\end{Remark}

\begin{Lemma} Suppose $T$ has $NIP$, and let $G$ be a definable group (over $\emptyset$). Let $H$ be a type-definable subgroup of $G$ such that $G/H$ is stable.  Then the intersection of all conjugates (under $A$-automorphisms) of $H$ is a bounded subintersection. 
\end{Lemma}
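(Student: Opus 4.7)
The plan is to argue by contradiction, adapting the Erd\H{o}s--Rado argument of Corollary~2.2 and invoking stability of $G/H$ to handle the residual asymmetric case. Write $H=H_{a_0}$ and $p:=\mathrm{tp}(a_0/\emptyset)$. If $\bigcap_{a'\models p}H_{a'}$ is not a bounded subintersection, then for any cardinal $\kappa$ we inductively build $(a_\alpha,g_\alpha)_{\alpha<\kappa}$ with $a_\alpha\models p$ and $g_\alpha\in\bigcap_{\beta<\alpha}H_{a_\beta}\setminus H_{a_\alpha}$. Erd\H{o}s--Rado extracts a $\emptyset$-indiscernible $(a_i,g_i)_{i<\omega}$ preserving $g_i\in H_{a_j}$ for $j<i$ and $g_i\notin H_{a_i}$; compactness extends this to $(a_i,g_i)_{i<\omega\cdot 2}$.

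By indiscernibility the truth value of $g_i\in H_{a_j}$ for $i<j$ is uniform, giving two cases. In \emph{Case~A} (yes), $g_i\in H_{a_j}\iff i\neq j$, contradicting Lemma~2.1 directly. In \emph{Case~B} (no), $g_i\in H_{a_j}\iff j<i$, and a secondary split occurs on the uniform truth value of $g_i^{-1}g_j\in H_{a_k}$ for $i<j<k$: \emph{Case~B.I} (yes) or \emph{Case~B.II} (no). Case~B.II is the only place stability is genuinely used.

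For Case~B.I, form the $\emptyset$-indiscernible derived sequence $((a_{2i},h_i))_{i<\omega}$ with $h_i:=g_{2i}^{-1}g_{2i+1}$. Splitting on whether $j<i$, $j=i$, or $j>i$ and using Case~B together with Case~B.I one computes $h_i\in H_{a_{2j}}\iff j\neq i$, contradicting Lemma~2.1 once more. For Case~B.II, set $\mathbf{a}:=\{a_k:\omega\le k<\omega\cdot 2\}$ and $H^{\#}:=\bigcap_{k\in[\omega,\omega\cdot 2)}H_{a_k}$, a type-definable subgroup over $\mathbf{a}$. Each $G/H_{a_k}$ is an automorphic image of $G/H$ and hence stable, so by Remark~1.4 the infinite product $\prod_{k}G/H_{a_k}$ is stable over $\mathbf{a}$; the natural map $G/H^{\#}\hookrightarrow\prod_{k}G/H_{a_k}$, $gH^{\#}\mapsto(gH_{a_k})_{k}$, is an $\mathbf{a}$-type-definable injection, and stability is inherited via this injection by a brief homogeneity argument (if $\sigma\in\mathrm{Aut}(\bar M/\mathbf{a})$ witnesses an equality of types of the images in the product, then since the map is $\mathbf{a}$-definable and injective $\sigma$ descends to the corresponding equality upstairs).

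Finally, in Case~B.II the initial segment $(a_i,g_i)_{i<\omega}$ is $\mathbf{a}$-indiscernible by truncation, and $g_iH^{\#}\neq g_jH^{\#}$ for distinct $i,j<\omega$ since Case~B.II forces $g_i^{-1}g_j\notin H_{a_k}$ for $k\in\mathbf{a}$. Stability of $G/H^{\#}$ over $\mathbf{a}$ applied to $(g_iH^{\#},a_i)_{i<\omega}$ then yields $g_i\in H_{a_j}\iff g_j\in H_{a_i}$ for distinct $i,j<\omega$; but Case~B reads this as $(j<i)\iff(i<j)$, which is absurd. The main obstacle is the case split inside Case~B: in Case~B.I the obvious stable quotient $G/H^{\#}$ collapses the sequence and one must exploit the group structure via the derived sequence to recover a Lemma~2.1 configuration, while Case~B.II is exactly where the stability hypothesis on $G/H$ is consumed.
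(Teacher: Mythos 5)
Your overall strategy (contradiction, Erd\H{o}s--Rado, then a case analysis on the uniform truth values supplied by indiscernibility) is a genuinely different route from the paper's, and Cases~A and~B.I are correct: in particular the derived-sequence computation $h_i\in H_{a_{2j}}\iff j\neq i$ in Case~B.I checks out and gives a clean Lemma~2.1 configuration. However, Case~B.II --- the only place where stability is used, and hence the heart of the matter --- has a genuine gap. Stability of $G/H^{\#}$ gives you $\mathrm{tp}(g_iH^{\#},a_j/\mathbf{a})=\mathrm{tp}(g_jH^{\#},a_i/\mathbf{a})$ for $i\neq j$, but this does \emph{not} yield $g_i\in H_{a_j}\iff g_j\in H_{a_i}$, because ``$x\in H_{a_j}$'' is not a property of the coset $xH^{\#}$ unless $H^{\#}\subseteq H_{a_j}$ (equivalently, unless $H_{a_j}$ is a union of $H^{\#}$-cosets). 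An automorphism over $\mathbf{a}$ witnessing the equality of types sends $g_i$ to some $g_jh$ with $h\in H^{\#}$ and $H_{a_j}$ to $H_{a_i}$; from $g_i\in H_{a_j}$ you only get $g_jh\in H_{a_i}$, which is compatible with $g_j\notin H_{a_i}$. The invariant properties you do get (``$g_iH^{\#}$ meets $H_{a_j}$'' or ``$g_iH^{\#}\subseteq H_{a_j}$'') produce no contradiction with Case~B. Since $H^{\#}$ is the intersection of a \emph{tail} of the conjugates while the $a_j$ you care about lie in the initial segment, there is no reason for the needed containment $H^{\#}\subseteq H_{a_j}$ to hold.

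Arranging exactly that containment is the role of Corollary~2.2 in the paper's proof, which your argument never invokes. The paper first applies Corollary~2.2 to find a position $\beta$ with $H_{a_\beta}\supseteq\bigcap_{\alpha\neq\beta}H_{a_\alpha}=:H$, then \emph{stretches} the indiscernible sequence at that position into a new sequence $(b_i,h_i)_{i<\omega}$; every $H_{b_i}$ then contains $H$, and $G/H$ is stable by Remark~1.4. Now membership in $H_{b_j}$ genuinely descends to the quotient $G/H$, so the symmetry from Definition~1.3 upgrades $h_i\in H_{b_j}$ for $j<i$ to $h_i\in H_{b_j}$ for all $i\neq j$, contradicting Lemma~2.1. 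To repair your proof you would need to replace the tail-intersection $H^{\#}$ by a subgroup that is provably contained in the groups indexed by your working sequence --- which is precisely the Corollary~2.2 plus stretching step; the case analysis alone cannot substitute for it.
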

\begin{proof}  Let $H$ be type-defined over the sequence $a$. If the lemma fails then we can find arbitrarily long sequences $(H_{a_{\alpha}}:\alpha < 
\kappa)$ with $tp(a_{\alpha}) = tp(a)$ for all $\alpha$ and such that  $H_{\beta}$ does not contain $\cap_{\alpha < \beta} H_{\alpha}$ for all $\beta < 
\kappa$. Fix some long such sequence, and let $g_{\beta}\in (\cap_{\alpha<\beta}H_{\alpha})\setminus H_{\beta}$ for each $\beta$.  By Erd\"os-
Rado,  we may and will assume that the sequence $((a_{\alpha}, g_{\alpha}):\alpha < \kappa)$ is indiscernible.
Now by Corollary 2.2, there is $\beta <\kappa$ such that $H_{a_{\beta}}\supseteq \cap_{\alpha\neq\beta} H_{\alpha}$. 
Now we can clearly ``stretch" the indicernible sequence  $(a_{\alpha},g_{\alpha})_{\alpha<\kappa}$ by replacing $(a_{\beta},g_{\beta})$ by a sequence $((b_{i},h_{i}):i<\omega)$.  
Namely, the  sequence $(a_{\alpha},g_{\alpha})_{\alpha < \beta}$ followed by the sequence $(b_{i},h_{i})_{i<\omega}$ followed by the sequence $(a_{\alpha},g_{\alpha})_{\beta < 
\alpha < \kappa}$ is indiscernible, 
and  of course $tp(b_{i},h_{i}/\{a_{\alpha},g_{\alpha}: \alpha < \kappa, \alpha\neq \beta\}) = tp(a_{\beta},g_{\beta}/\{a_{\alpha},g_{\alpha}: \alpha < 
\kappa, \alpha\neq \beta\})$ for each $i<\omega$. Let $H = \cap_{\alpha\neq\beta}H_{\alpha}$, and write $H_{i}$ for $H_{b_{i}}$.  So we have
\newline
(I) $H\subseteq H_{i}$ for all $i<\omega$, and
\newline
(II) $G/H$ is stable  (by Remark 1.4), and also of course
\newline
(III) $h_{i}\in \cap_{j<i}H_{j}\setminus H_{i}$  for all $i<\omega$. 

Let us work over the set of parameters $A = \{a_{\alpha}:\alpha\neq \beta\}$ over which $H$ is defined. Note that $(b_{i},h_{i})_{i<\omega}$ is still indiscernible over $A$, as is the sequence
$(b_{i},(h_{i}/H))_{i<\omega}$. 
So as $h_{i}/H \in H_{j}/H$ for $j<i$ in $\omega$ it follows from Definition 1.3 and (II) that $h_{i}/H \in H_j/H$ for $i<j$ in $J$.  By (I) it follows that $h_{i}\in H_j$ for all $i<j$ in $\omega$. We conclude that

$$ h_{i}\in H_{j} \leftrightarrow i\neq j$$
for all $i,j\in \omega$.

As we are assuming that $T$ has $NIP$, this is a contradiction to Lemma 2.1.
\end{proof}

\begin{proof}[Proof of Theorem 1.1]   
If $H_{a}$ is a subgroup of $G$, type defined over $a$, such that $G/H_{a}$ is stable, then the intersection of all automorphic conjugates of $H_{a}$ is, by Lemma 2.3, a bounded subintersection $H$ which is clearly $\emptyset$-invariant hence type-defined over $\emptyset$. Moreover $G/H$ is also stable, by Remark 1.4.  In particular $H_{a}$ contains $G^{st}_{\emptyset}$, the smallest type-definable over $\emptyset$ subgroup of $G$ with stable quotient. This gives (i), (ii), and (iii) of Theorem 1.1. 

Normality of $G^{st}$ follows: let $g\in G$, then clearly $G/(G^{st})^{g}$ is also stable, whereby $G/(G^{st}\cap (G^{st})^{g})$ is stable.  The minimality of $G^{st}$ implies that 
$(G^{st})^{g} = G^{st}$. As $g\in G$ was arbitrary, $G^{st}$ is a normal subgroup of $G$, as required.

\end{proof}

\begin{Remark} (i) Theorem 1.1 implies the existence of $G^{00}$ (when $T$ has $NIP$). 
\newline
(ii)  The Lemmas above and Theorem 1.1  also hold when $G$ is type-definable.
\newline
(ii) We leave it to the reader to check that $G^{st}$ has no proper hyperdefinable stable quotient.
\newline
(iii) Also the proofs in this section adapt easily to showing that if $G$ is a stable hyperdefinable group in an arbitrary ambient theory, then any intersection of type-definable subgroups is a bounded subintersection. In other words there is no unbounded chain of type-definable subgroups of $G$.
\end{Remark}


\vspace{5mm}
\noindent

Finally we discuss some examples and ask some questions.  We return to the notation $G^{st,0}$ and $G^{st,00} (= G^{st})$ from the introduction.
As (in a $NIP$ theory) $G^{st}$ exists, so does $G^{st,0}$. An example where $G^{st,0}\neq G^{st,00}$ is any definably connected  definably compact group in an $o$-minimal expansion of $RCF$. In this case $G^{st,0} = G$ as we have $DCC$ on definable subgroups, and no quotient of $G$ by a proper definable subgroup can be stable. But $G^{st,00}$ is contained in $G^{00}$ which is a proper subgroup. We believe that in this $o$-minimal environment, and in fact in any distal theory,  $G^{00} = G^{st,00}$, but it requires checking. 
\begin{Problem} Give an example of a definable or even type-definable group $G$ in an $NIP$ theory such that $G = G^{00}$ but $G^{st,00}$ is properly contained in $G^{st,0}$.
\end{Problem}


\begin{thebibliography}{99}
\bibitem{Casanovas-book}  E. Casanovas, {\em Simple theories and hyperimaginaries}, Lecture Notes in Logic, Cambridge University Press -- Association of Symbolic Logic, 2011.
\bibitem{CH} Z. Chatzidakis and E. Hrushovski,  Model theory of difference fields, Transactions AMS, 351, number 8 (1999), p. 2997-3071.
\bibitem{HKP} B. Hart, B. Kim, A. Pillay, Coordinatisation and canonical bases in simple theories, JSL,  Volume 65, Issue 1 (2000), p. 293-309.
\bibitem{NIPI} E. Hrushovsk and A. Pillay, Groups, measures, and the $NIP$, Journal AMS, 21(2008), p. 563-596 .
\bibitem{Shelah} S. Shelah,  Minimal bounded index subgroup for dependent theories, Proceedings AMS,
136 (2008), 1087 -1091.



\end{thebibliography}
\end{document}